\def\a{{\mathfrak{a}}} 
\def\b{{\mathfrak{b}}}
\def\J{{\mathcal{J}}} 
\def\m{{\mathfrak{m}}}
\def\Z{{\mathbb{Z}}}
\def\sO{{\mathcal{O}}}
\def\Q{{\mathbb{Q}}}
\def\Hom{{\mathrm{Hom}}}
\def\Im{{\mathrm{Im}}} 
\def\Ann{{\mathrm{Ann}}}
\def\Spec{{\mathrm{Spec\; }}}
\def\J{{\mathcal{J}}}
\theoremstyle{plain}
\newtheorem{thm}{Theorem}[section] 
\newtheorem{prop}[thm]{Proposition}
\newtheorem*{mainthm}{Theorem}
\newtheorem{lem}[thm]{Lemma}
\theoremstyle{definition} 
\newtheorem{defn}[thm]{Definition}
\newtheorem{propdef}[thm]{Proposition-Definition} 
\theoremstyle{remark}
\newtheorem{rem}[thm]{Remark}
\newtheorem*{cl}{Claim}
\newtheorem*{acknowledgement}{Acknowledgments}
\title{A subadditivity formula for multiplier ideals\\ associated to log pairs}
\author{Shunsuke Takagi}
\address{Department of Mathematics, Kyushu University, 744 Motooka, Nishi-ku, Fukuoka 819-0395, Japan.}
\email{stakagi@math.kyushu-u.ac.jp}
\subjclass[2010]{13A35, 14B05, 14E15, 14F18}
\begin{document}
\tolerance = 9999

\maketitle
\markboth{SHUNSUKE TAKAGI}{A SUBADDITIVITY FORMULA FOR MULTIPLIER IDEALS 
}

\begin{abstract}
As a generalization of formulas given in \cite{DEL}, \cite{Ei} and \cite{Ta1}, 
we prove a subadditivity formula for multiplier ideals associated to log pairs. 
\end{abstract}

\section*{Introduction}
Multiplier ideals satisfy vanishing theorems, making them a fundamental tool in higher-dimensional algebraic geometry. 
They are defined as follows:
let $(X, \Delta)$ be a \textit{log pair}, that is, $\Delta$ be an effective $\Q$-divisor on a normal variety $X$ over a field of characteristic zero such that $K_X+\Delta$ is $\Q$-Cartier. 
Let $\a \subseteq \sO_X$ be an ideal sheaf and $t$ be a real number. 
Suppose that $\pi:\widetilde{X} \to X$ is a log resolution of $(X, \Delta, \a)$, that is, 
$\pi$ is a proper birational morphism with $\widetilde{X}$ nonsingular such that $\a \sO_{\widetilde{X}}=\sO_{\widetilde{X}}(-F)$ is invertible and $\mathrm{Exc}(\pi) \cup \mathrm{Supp}(\pi^{-1}_*\Delta) \cup \mathrm{Supp}(F)$ is a simple normal crossing divisor. 
Then the multiplier ideal $\J((X, \Delta);\a^t)$ of $\a$ with exponent $t$ for the pair $(X, \Delta)$ is
$$\J((X, \Delta);\a^t)=\pi_*\sO_{\widetilde{X}}(\lceil K_{\widetilde{X}}-\pi^*(K_X+\Delta)-tF \rceil) \subseteq \sO_X.$$

Demailly, Ein and Lazarsfeld \cite{DEL} formulated a subadditivity property of multiplier ideals on nonsingular varieties, which states that the multiplier ideal of the product of two ideal sheaves is contained in the product of their individual multiplier ideals.
Their formula has many interesting applications in algebraic geometry and commutative algebra, such as Fujita's approximation theorem (see \cite{Fu} and \cite[Theorem 10.3.5]{La}) and its local analogue (see \cite{ELS2}), a problem on the growth of symbolic powers of ideals in regular rings (see \cite{ELS}), and etc. 
Later, Takagi \cite{Ta1} and Eisenstein \cite{Ei} generalized their formula to the case of $\Q$-Gorenstein varieties, that is, the case when $\Delta=0$ in the above definition of multiplier ideals. 
In this article, we study a further generalization to the case of log pairs, when the importance of multiplier ideals is particularly highlighted. 
The following is our main result. 

\begin{mainthm}[Theorems \ref{main} and \ref{main2}]
Let $X$ be a normal variety over an algebraically closed field of characteristic zero and $\Delta$ be an effective $\Q$-divisor on $X$ such that $r(K_X+\Delta)$ is Cartier for some integer $r \ge 1$. 
Let $\mathrm{Jac}_X$ denote the Jacobian ideal sheaf of $X$.
Then 
$$\mathrm{Jac}_X \cdot \J\left((X, \Delta); \a^s\b^t \sO_X(-r \Delta)^{1/r}\right) \subseteq  \J((X, \Delta); \a^s)\J((X, \Delta); \b^t).$$
for any ideal sheaves $\a, \b \subseteq \sO_X$ and for any real numbers $s, t>0$. 
\end{mainthm}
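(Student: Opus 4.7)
The plan is to follow the classical diagonal trick of Demailly--Ein--Lazarsfeld \cite{DEL}, adapted to the log pair setting in the spirit of \cite{Ta1} and \cite{Ei}. Set $Y = X \times X$ with projections $p_1, p_2: Y \to X$, let $\delta: X \hookrightarrow Y$ be the diagonal embedding, and endow $Y$ with the log pair structure $\Delta_Y := p_1^*\Delta + p_2^*\Delta$. Since $r(K_Y + \Delta_Y) = p_1^*(r(K_X+\Delta)) + p_2^*(r(K_X+\Delta))$ is Cartier, $(Y, \Delta_Y)$ is indeed a log pair. The first substantive step is to establish a product formula
$$\J((Y, \Delta_Y); (p_1^{-1}\a)^s (p_2^{-1}\b)^t) = p_1^{-1}\J((X, \Delta); \a^s) \cdot p_2^{-1}\J((X, \Delta); \b^t),$$
which can be proved by taking a product of log resolutions of $(X, \Delta, \a)$ and $(X, \Delta, \b)$ and combining K\"unneth with local vanishing.

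The heart of the argument is a restriction theorem to the diagonal: for any ideal sheaf $\mathfrak{c} \subseteq \sO_Y$ one should have
$$\mathrm{Jac}_X \cdot \J((X, \delta^*\Delta_Y); \mathfrak{c}|_X) \subseteq \J((Y, \Delta_Y); \mathfrak{c})|_X,$$
with the Jacobian entering because the diagonal embedding fails to be regular along the singular locus of $X$. On the diagonal one has $\delta^*\Delta_Y = 2\Delta$, and although $(X, 2\Delta)$ need not literally be a log pair (since $r\Delta$ may not be Cartier), the symbol $\J((X, 2\Delta); \mathfrak{c}|_X)$ is naturally interpreted as $\J((X, \Delta); \mathfrak{c}|_X \cdot \sO_X(-r\Delta)^{1/r})$; on a log resolution $\pi: \widetilde{X} \to X$ that also resolves $\sO_X(-r\Delta)$ via $\sO_X(-r\Delta)\sO_{\widetilde{X}} = \sO_{\widetilde{X}}(-G)$, the divisor $(1/r)G$ plays the role of a formal pullback $\pi^*\Delta$ in the discrepancy calculus, and the resulting ideal is independent of the chosen resolution. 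Substituting $\mathfrak{c} = (p_1^{-1}\a)^s (p_2^{-1}\b)^t$ into the restriction theorem and combining with the product formula then yields the claimed inclusion.

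The main obstacle will be the restriction theorem itself: the versions in \cite{Ta1} and \cite{Ei} are formulated only for $\Q$-Gorenstein ambient varieties, and adapting them to the log pair setting requires careful tracking of the boundary contributions from $\Delta_Y$. My approach would be to take a log resolution of $(Y, \Delta_Y)$ dominating a product of log resolutions of $(X, \Delta)$ and compatible with the diagonal, and then argue via Grothendieck duality for the diagonal embedding, as in the $\Q$-Gorenstein case, with the extra $\Delta$-dependent correction accounted for precisely by the factor $\sO_X(-r\Delta)^{1/r}$. Once this is established, the remaining verifications --- independence of the interpretation of $\sO_X(-r\Delta)^{1/r}$ from the chosen log resolution, and the passage from $\mathrm{Jac}_X$ (as intrinsic to $X$) to the Jacobian of the diagonal embedding --- reduce to bookkeeping within the discrepancy calculus for log pairs.
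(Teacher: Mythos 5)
Your plan follows the same route as the paper's second proof (Section~4, Theorem~\ref{main2}): pull back to $X\times X$ with the diagonal log pair $(Y,\Delta_Y)=(X\times X, p_1^*\Delta+p_2^*\Delta)$, use the product formula for multiplier ideals, and restrict back to the diagonal via a factorizing embedded resolution. The hard step you isolate --- the restriction-to-the-diagonal inequality with a Jacobian correction --- is exactly what the paper proves, though the paper does not phrase it as a black-box ``restriction theorem'' proved by Grothendieck duality; instead it runs an explicit argument through Lemma~\ref{surjective} (Kawamata--Viehweg vanishing applied to the blow-up along the strict transform of the diagonal) and the Jacobian comparison Lemma~\ref{Jacobian} together with Eisenstein's \cite[Lemma~6.3]{Ei}. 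A genuine organizational difference: the paper does not juggle $\mathrm{Jac}_X$ and $\sO_X(-r\Delta)^{1/r}$ separately as you do. It instead introduces the ideal $\mathcal I_{r,\Delta}$ (the image of $(\Omega_X^d)^{\otimes r}\to\sO_X(r(K_X+\Delta))$), which simultaneously packages the Jacobian and boundary contributions, yielding the cleaner containment $\J\bigl((X,\Delta);\a^s\b^t\mathcal I_{r,\Delta}^{1/r}\bigr)\subseteq\J((X,\Delta);\a^s)\,\J((X,\Delta);\b^t)$; your version of the statement then follows from $\mathrm{Jac}_X^r\,\sO_X(-r\Delta)\subseteq\mathcal I_{r,\Delta}$, and one even gets $\overline{\mathrm{Jac}_X}$ in place of $\mathrm{Jac}_X$. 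That is worth adopting, because without $\mathcal I_{r,\Delta}$ your phrasing ``$\J((X,2\Delta);\cdot)$ is interpreted as $\J((X,\Delta);\cdot\,\sO_X(-r\Delta)^{1/r})$'' has to be justified carefully --- $K_X+2\Delta$ need not be $\Q$-Cartier and $\sO_X(-r\Delta)$ need not pull back to $r\pi^*\Delta$ --- and the ideal $\mathcal I_{r,\Delta}$ sidesteps these comparisons.

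Two further remarks. First, what you describe as ``bookkeeping within the discrepancy calculus'' is the technical heart: one must verify an inequality of the form $(B-d\cdot R_X)\big|_{\overline X}\ge K_{\overline X}-f|_{\overline X}^*(K_X+\Delta)-\tfrac1r\,h|_{\overline X}^*F_{r,\Delta}$, and this relies on the product structure of the resolution to split $\mathrm{Jac}_{f|_{\overline X}}$ into $\mathrm{Jac}_h$ and two copies of $\mathrm{Jac}_\pi$; this is where the extra power of the Jacobian of the embedding is absorbed, and it does not follow formally from a single application of a Jacobian-twisted restriction theorem. Second, the paper gives a completely independent first proof via reduction modulo $p$: a subadditivity inequality for big generalized test ideals (Proposition~\ref{charp}) combined with the Hara--Yoshida/Takagi correspondence between multiplier ideals and test ideals. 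Your proposal does not touch that route, which in particular yields the statement without the $\mathcal I_{r,\Delta}$ machinery, at the cost of working over a field that need not be algebraically closed but losing the integral closure improvement on $\mathrm{Jac}_X$.
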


We give two proofs of this. 
The first proof is a refinement of the arguments in \cite{Ta1}. 
We give a subadditivity formula for generalized test ideals, which itself is interesting from the point of view of algebraic geometry and commutative algebra in positive characteristic. 
Then we use a correspondence between multiplier ideals and big generalized test ideals (see \cite{HY} and \cite{Ta2}) to obtain the assertion. 
In the second proof, we employ the same methods as those used in \cite{Ei}. 
We pull back the problem to the product $X \times X$ and then the desired formula on $X$ is obtained by restricting to the diagonal.
We use factorizing embedded resolutions to compute the restriction of multiplier ideals on $X \times X$ to the diagonal. 

%An interesting application of our formula is found in \cite{BdFF}.

\begin{small}
\begin{acknowledgement}
The author would like to thank Tommaso de Fernex who raised this problem. 
He is indebted to Salvatore Cacciola for pointing out a mistake in a previous version of the article. 
The author also would like to express his gratitude to the Massachusetts Institute of Technology, where a part of this work was done, for their hospitality during the winter of 2010--2011. 
The author was partially supported by Grant-in-Aid for Young Scientists (B) 20740019 from JSPS and by Program for Improvement of Research Environment for Young Researchers from SCF commissioned by MEXT of Japan.
\end{acknowledgement}
\end{small}

\section{Preliminaries on big generalized test ideals}
In this section, we briefly review the definition and basic properties of big generalized test ideals, which we will need later.
The reader is referred to \cite{HT}, \cite{HY}, \cite{Ta2} and \cite{BSTZ} for details. 
The reader interested only in an algebro-geometric proof of our result can directly go to Section 3. 

Throughout this paper, all schemes are Noetherian, excellent and separated.  
A \textit{graded family} of ideal sheaves $\a_{\bullet}=\{\a_m\}_{m \ge 0}$ on an integral scheme $X$ means a collection of nonzero ideal sheaves $\a_m \subseteq \sO_X$, satisfying $\a_0=\sO_X$ and $\a_k  \a_l \subseteq \a_{k+l}$ for all $k, l \ge 1$. 
For example, given an ideal sheaf $\a \subseteq \sO_X$ and a real number $t \ge 0$, $\a_{\bullet}=\{\a^{\lceil tm \rceil}\}$ is a graded family of ideal sheaves on $X$.  
Another example of graded families of ideal sheaves is $I_{\Delta}^{(\bullet)}=\{\sO_X(-\lceil m \Delta \rceil)\}$ where $\Delta$ is an effective $\Q$-divisor on a normal scheme $X$. 

Let $X$ be an integral scheme of prime characteristic $p$. 
For each integer $e \ge 1$,  we denote by $F^e: X \to X$ or $F^e: \sO_X \to F^e_*\sO_X$ the $e$-th iteration of the absolute Frobenius morphism on $X$. 
We say that $X$ is \textit{$F$-finite} if $F: X \to X$ is a finite morphism. 
For example, a field $K$ of characteristic $p>0$ is $F$-finite if and only if $[K:K^p]$ is finite. 
Given an ideal sheaf $I \subseteq \sO_X$, for each $q=p^e$, we denote by $I^{[q]} \subseteq \sO_X$ the ideal sheaf identified with $I \cdot F^e_*\sO_X$ via the identification $F^e_*\sO_X \cong \sO_X$. 
generated by the $q$-th powers of all elements of $I$. 

We give the definition of big generalized test ideals, using a generalization of tight closure \cite{HY}, \cite{Ta2}. 
First we recall the definition of a generalization of tight closure.  

\begin{defn}[\textup{\cite[Definition 6.1]{HY}, \cite[Definition 2.1]{Ta2}, \cite[Definition 2.16]{Sc}}]\label{tight closure def}
Let $X$ be a $d$-dimensional $F$-finite normal integral affine scheme of characteristic $p>0$, $\Delta$ be an effective $\Q$-divisor on $X$ and $\a_{\bullet}$ be a graded family of ideals on $X$.  
\renewcommand{\labelenumi}{(\roman{enumi})}
\begin{enumerate}
\item 
Let $I \subseteq \sO_{X}$ be a nonzero ideal. Then the \textit{$(\Delta, \a_{\bullet})$-tight closure} $I^{*(\Delta, \a_{\bullet})}$ of $I$ is defined to be the ideal of $\sO_{X}$ consisting of all $z \in \sO_{X}$ for which there exists a nonzero element $c \in \sO_{X}$ such that
$$c\a_{q-1}z^q \in I^{[q]}\sO_X(\lceil (q-1) \Delta \rceil)$$ 
for all large $q=p^e$. 

\item 
Denote by $E=\bigoplus_x H^d_x(\omega_X)$ the direct sum, taken over all closed points $x \in X$, of the $d$-th local cohomology modules of the canonical module $\omega_X$ of $X$ with support on $x$. 
For each integer $e \ge 1$, let 
$$F^e_E:E=\bigoplus_x H^d_x(\sO_X(K_X)) \to \bigoplus_x H^d_x(\sO_X(p^eK_X))$$ 
be the map induced by the $e$-times iterated Frobenius map $F^e: \sO_{X} \to F^e_*\sO_{X}$.  
Then the \textit{$(\Delta, \a_{\bullet})$-tight closure} $0^{*(\Delta, \a_{\bullet})}_{E}$ of the zero submodule in $E$ is defined to be the submodule of $E$ consisting of all $z \in E$ for which there exists  a nonzero element $c \in \sO_{X}$ such that 
$$c\a_{q-1}F^e_E(z)=0 \; \textup{ in $\; \bigoplus_x H^d_x(\sO_X(qK_X+\lceil (q-1) \Delta \rceil))$}$$ 
for all large $q=p^e$. 

\item We say that a nonzero element $c \in \sO_X$  is a \textit{big sharp test element} for the triple $(X, \Delta, \a_{\bullet})$ if for all $z \in 0^{*(\Delta, \a^t)}_{E}$, we have 
$$c\a_{q-1}F^e_E(z)=0  \; \textup{ in $\; \bigoplus_x H^d_x(\sO_X(qK_X+\lceil (q-1) \Delta \rceil))$}$$
for every $q=p^e$. 
Big sharp test elements always exist (see \cite[Lemma 2.17]{Sc}). 
\end{enumerate}
\end{defn}

\begin{propdef}[\textup{\cite[Definition-Proposition 3.3]{BSTZ}, cf.~\cite[Lemma 2.1]{HT}}]\label{characterization}
Let the notation be the same as in Definition \ref{tight closure def}.
Then each of the following conditions defines the same ideal, which is called the \textit{big generalized
test ideal} for the triple $(X, \Delta, \a_{\bullet})$ and denoted by $\tau_b(X, \Delta, \a_{\bullet})$. 
\renewcommand{\labelenumi}{(\alph{enumi})}
\begin{enumerate}
\item $\Ann_{\sO_{X}}0^{*(\Delta, \a_{\bullet})}_{E}$.
\item The ideal generated by all big sharp test elements for $(X, \Delta, \a_{\bullet})$. 
\item For any integer $e_0 \ge 1$, the sum 
$$\sum_{e \ge e_0}\sum_{\phi_e} \phi_e(F^e_*(c \a_{p^e-1})),$$
where $\phi_e$ ranges over all elements of $\Hom_{\sO_X}(F^e_*\sO_{X}((\lceil (p^e-1) \Delta \rceil), \sO_X)$ and $c$ is a big sharp test element for $(X, \Delta, \a_{\bullet})$.
\end{enumerate}
When $\Delta=0$, we denote this ideal simply by $\tau_b(X, \a_{\bullet})$. 
When $\a_{\bullet}=\{\a^{\lceil tm \rceil}\}$ for a nonzero ideal $\a \subseteq \sO_X$ and a real number $t>0$, we denote this ideal by $\tau_b(X, \Delta, \a^t)$. 
\end{propdef}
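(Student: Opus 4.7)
The plan is to exploit Matlis duality between ideals of $\sO_X$ and Frobenius-compatible submodules of $E$, together with the Grothendieck-duality identification
$$\Hom_{\sO_X}\bigl(F^e_*\sO_X(\lceil(p^e-1)\Delta\rceil),\, \sO_X\bigr) \;\cong\; F^e_*\sO_X\bigl(\lceil(p^e-1)\Delta\rceil + (1-p^e)K_X\bigr),$$
which translates the maps $\phi_e$ appearing in (c) into twisted actions on $E$ dual to the Frobenius maps $F^e_E$ from the definition of $0^{*(\Delta,\a_\bullet)}_E$.

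For (b) $\subseteq$ (a), if $c$ is a big sharp test element then applying the defining condition at $q=1$ (where $\a_0 = \sO_X$ and $F^0_E = \mathrm{id}$) yields $cz = 0$ in $E$ for every $z \in 0^{*(\Delta,\a_\bullet)}_E$, hence $c \in \Ann_{\sO_X} 0^{*(\Delta,\a_\bullet)}_E$. Conversely, for (a) $\subseteq$ (b), I fix one big sharp test element $c_0$ (which exists by [Sc, Lemma 2.17]) and let $c$ be an arbitrary element of (a). For any $z \in 0^{*(\Delta,\a_\bullet)}_E$ and any $q = p^e$, the element $\a_{q-1} F^e_E(z)$ lies in a natural tight-closure submodule of $\bigoplus_x H^d_x(\sO_X(qK_X + \lceil(q-1)\Delta\rceil))$ whose annihilator contains the annihilator of $0^{*(\Delta,\a_\bullet)}_E$; in particular $c \a_{q-1} F^e_E(z) = 0$, so $c$ is itself a big sharp test element.

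For (a) $=$ (c), the Matlis-dual reformulation says that an ideal $J \subseteq \sO_X$ annihilates $0^{*(\Delta,\a_\bullet)}_E$ if and only if, for every $e \ge 1$ and every $\phi_e \in \Hom_{\sO_X}(F^e_*\sO_X(\lceil(p^e-1)\Delta\rceil), \sO_X)$, one has $\phi_e\bigl(F^e_*(J \cdot \a_{p^e-1})\bigr) \subseteq J$. Thus (a) is the smallest such $J$ containing a big sharp test element $c$, and iterating the stability condition starting from the principal ideal $(c)$ reproduces the sum in (c). Independence of $e_0$ follows from the composition law $\phi_{e+e'} = \phi_e \circ F^e_*(\phi_{e'})$ combined with the inclusion $\a_{p^e-1} \cdot \a_{p^{e'}-1}^{[p^e]} \subseteq \a_{p^{e+e'}-1}$ coming from the graded-family axiom.

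The main obstacle I anticipate is the divisor-round-up bookkeeping in this last compatibility: keeping track of $\lceil(p^e-1)\Delta\rceil + p^e\lceil(p^{e'}-1)\Delta\rceil$ versus $\lceil(p^{e+e'}-1)\Delta\rceil$ (which need only satisfy an inequality in general) requires care in order to conclude that the choice of $e_0$ truly is immaterial.
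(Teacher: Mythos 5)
The paper does not prove this Proposition-Definition at all; it is quoted verbatim from \cite[Definition-Proposition 3.3]{BSTZ} (with \cite[Lemma 2.1]{HT} as an earlier model) and used as a black box. So there is no ``paper's own proof'' to compare against. What you have written is a sketch of the standard argument from those references, and the overall architecture --- Matlis/local duality between $E$ and $\sO_X$, the Grothendieck-duality identification of $\Hom_{\sO_X}(F^e_*\sO_X(\lceil(p^e-1)\Delta\rceil),\sO_X)$ with a Frobenius pushforward of a twist, the $q=1$ observation for (b) $\subseteq$ (a), and the persistence argument for (a) $\subseteq$ (b) --- matches what is done there.

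Two places where the sketch is thinner than a complete proof. First, in (a) $\subseteq$ (b) the assertion that $\a_{q-1}F^e_E(z)$ ``lies in a natural tight-closure submodule whose annihilator contains the annihilator of $0^{*(\Delta,\a_\bullet)}_E$'' is exactly the content that needs an argument: you must verify, using the graded-family axiom and a fixed big sharp test element $c_0$, that any annihilator $c$ of $0^{*(\Delta,\a_\bullet)}_E$ kills $\a_{q-1}F^e_E(z)$ for \emph{every} $q=p^e$, not merely that some $c_0$ does so for $q\gg0$. The iteration you allude to (apply $F^{e'}_E$ and use $\a_{p^e-1}\a_{p^{e'}-1}^{[p^e]}\subseteq\a_{p^{e+e'}-1}$) is the right move, but you do need to write it down, and you also need that the Frobenius actions on the various twisted modules $\bigoplus_x H^d_x(\sO_X(qK_X+\lceil(q-1)\Delta\rceil))$ are compatible. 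Second, the round-up worry you raise at the end is in fact not an obstacle, and you should convince yourself of this rather than leave it flagged: the inequality $\lceil(p^e-1)\Delta\rceil + p^e\lceil(p^{e'}-1)\Delta\rceil \ge \lceil(p^{e+e'}-1)\Delta\rceil$ always holds (round-ups only go up under addition and scaling by $p^e$), and it points in the direction you need, namely $\sO_X\bigl(\lceil(p^{e+e'}-1)\Delta\rceil\bigr)\subseteq\sO_X\bigl(\lceil(p^e-1)\Delta\rceil + p^e\lceil(p^{e'}-1)\Delta\rceil\bigr)$, so the composite $\phi_e\circ F^e_*\phi_{e'}$ does restrict to a valid $\phi_{e+e'}$. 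With those two points filled in, the argument is the same as the cited one.
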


\begin{rem}\label{remark}
(1)
Given graded families of ideals $\a_{1, \bullet}, \dots, \a_{r, \bullet}$ on $X$, we can define the ideal 
$\tau_b(X, \Delta, \a_{1, \bullet} \cdots \a_{r, \bullet})$ in the same manner as above. 

(2) (\cite[Remark 1.4]{TY})
$\tau_b(X, \Delta, \a_{\bullet})$ is equal to the unique maximal element among the set of ideals $\{\tau_b(X, \Delta, \a_{p^e}^{1/p^e})\}_{e \ge 0}$ with respect to inclusion. 
If $\a_{\bullet}$ is a descending filtration, then $\tau_b(X, \Delta, \a_{\bullet})$ is equal to the unique maximal element among the set of ideals $\{\tau_b(X, \Delta, \a_{m}^{1/m})\}$. 

(3)
Since the formation of $\tau_b(X, \Delta, \a_{\bullet})$ commutes with localization (see \cite[Proposition 3.1]{HT}), we can define the ideal sheaf $\tau_b(X, \Delta, \a_{\bullet})$ when $X$ is a non-affine scheme by gluing over affine charts. 
\end{rem}

Hara--Yoshida \cite{HY} and Takagi \cite{Ta2} proved a correspondence between multiplier ideals and big generalized test ideals. 
In order to state their results, we briefly recall how to reduce things from characteristic zero to characteristic $p>0$. 
We refer the reader to \cite[Chapter 2]{HH} and \cite[Section 3.2]{MS} for details. 

Let $\Delta$ be an effective $\Q$-divisor on a normal variety $X$ over a field $k$ of characteristic zero. 
Let $\a \subseteq \sO_X$ be an ideal sheaf and $t>0$ be a real number. 
Then a \textit{model} of $(X, \Delta, \a)$ over a finitely generated $\Z$-subalgebra $A$ of $k$ is a triple $(X_A, \Delta_A, \a_A)$ of a normal integral scheme $X_A$ of finite type over $A$, an effective $\Q$-divisor $\Delta_A$ on $X_A$ and an ideal sheaf $\a_A \subseteq \sO_{X_A}$ such that $X_A \times_{\Spec A} \Spec k \cong X$, $\rho^*\Delta_A = \Delta$ and $\rho^{-1}\a_A=\a$, where $\rho: X \to X_A$ is a natural projection. 
Given a closed point $\mu \in \Spec A$, we denote by $X_{\mu}$ (resp., $\Delta_{\mu}$, $\a_{\mu}$) the fiber of $X_A$ (resp., $\Delta_A$, $\a_A$) over $\mu$.  
Note that $X_{\mu}$ is a scheme of finite type over the residue field $\kappa(\mu)$ of $\mu$, which is a finite field.

\begin{thm}[\textup{\cite[Theorem 3.2]{Ta2}, \cite[Theorem 6.8]{HY}}]\label{mult thm}
Let $X$ be a normal variety over a field $k$ of characteristic zero and $\Delta$ be an effective $\Q$-divisor on $X$ such that $K_X+\Delta$ is $\Q$-Cartier. 
Let $\a$ be a nonzero ideal sheaf on $X$ and $t>0$ be a real number. 
Given any model $(X_A, \Delta_A, \a_A)$ over a finitely generated $\Z$-subalgebra $A$ of $k$, 
there exists an open subset $W \subseteq \Spec A$ $($depending on $t$$)$ such that  
$$\J((X,\Delta); \a^t)_{\mu}=\tau_b(X_{\mu},\Delta_{\mu},\a_{\mu}^t)$$
for every closed point $\mu \in W$. 
\end{thm}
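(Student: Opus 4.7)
The plan is to spread all the data to a model over $A$ and then compare the geometric definition of $\J((X,\Delta);\a^t)$ with the Frobenius-theoretic definition of $\tau_b(X_\mu,\Delta_\mu,\a_\mu^t)$ via Matlis duality together with an appropriate vanishing-and-base-change statement. By Remark \ref{remark}(3) we may work locally and assume $X=\Spec R$. Fix a log resolution $\pi:\widetilde{X}\to X$ of $(X,\Delta,\a)$, write $\a\sO_{\widetilde{X}}=\sO_{\widetilde{X}}(-F)$, and set $D:=\lceil K_{\widetilde{X}}-\pi^*(K_X+\Delta)-tF\rceil$, so that $\J((X,\Delta);\a^t)=\pi_*\sO_{\widetilde{X}}(D)$.

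First I would enlarge $A$ so that $\pi$, $\Delta_A$, $\a_A$, $F_A$ and $D_A$ all descend, and shrink $\Spec A$ so that for every closed point $\mu\in W$ the reduction $\pi_\mu$ remains a log resolution of $(X_\mu,\Delta_\mu,\a_\mu)$, $r(K_{X_\mu}+\Delta_\mu)$ is Cartier, and the formation of $\pi_{A,*}\sO_{\widetilde{X}_A}(D_A)$ commutes with base change to $\mu$. For the containment $\tau_b(X_\mu,\Delta_\mu,\a_\mu^t)\subseteq \J((X,\Delta);\a^t)_\mu$, I would use the characterization in Proposition-Definition \ref{characterization}(c): for a big sharp test element $c$ and each $q=p^e$, every map $\phi_e\in\Hom_{\sO_{X_\mu}}(F^e_*\sO_{X_\mu}(\lceil(q-1)\Delta_\mu\rceil),\sO_{X_\mu})$ lifts, via local duality along $\pi_\mu$, to a Frobenius trace on $\widetilde{X}_\mu$. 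A divisor-theoretic bookkeeping using $\a_\mu\sO_{\widetilde{X}_\mu}=\sO_{\widetilde{X}_\mu}(-F_\mu)$ and the definition of $D_\mu$ then shows that $\phi_e(F^e_*(c\,\a_\mu^{\lceil t(q-1)\rceil}))\subseteq \pi_{\mu,*}\sO_{\widetilde{X}_\mu}(D_\mu)=\J((X,\Delta);\a^t)_\mu$.

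For the reverse inclusion $\J((X,\Delta);\a^t)_\mu\subseteq \tau_b(X_\mu,\Delta_\mu,\a_\mu^t)$, I would use the annihilator description $\tau_b=\Ann_{R_\mu}0^{*(\Delta_\mu,\a_\mu^t)}_{E}$ from Proposition-Definition \ref{characterization}(a). It suffices to show that any section $s\in \J((X,\Delta);\a^t)_\mu$ kills $0^{*(\Delta_\mu,\a_\mu^t)}_E$. Matlis-dualizing, this translates into a statement about the Frobenius action on $H^d_\m(\omega_{X_\mu})$; pulling this action back to $H^d_{\pi_\mu^{-1}(\m)}(\omega_{\widetilde{X}_\mu}(\text{effective correction}))$ through $\pi_\mu$ and using the explicit divisorial form of $D_\mu$, the desired vanishing $s\cdot c\cdot \a_\mu^{q-1}\cdot F^e_E(z)=0$ follows from the very definition of $0^{*(\Delta_\mu,\a_\mu^t)}_E$ together with the base-change identification above.

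The main obstacle is the vanishing-plus-base-change input in the setup. In characteristic zero the identity $\J((X,\Delta);\a^t)=\pi_*\sO_{\widetilde{X}}(D)$ is essentially forced by Kawamata-Viehweg vanishing, which can fail after reduction modulo $p$. The workaround is to invoke \emph{generic} Serre vanishing on the model: relative vanishing $R^i\pi_{A,*}\sO_{\widetilde{X}_A}(D_A)=0$ ($i>0$) holds on a nonempty open of $\Spec A$ because it holds on the characteristic-zero generic fiber by Kawamata-Viehweg, and this persists at closed points $\mu$ in a nonempty open subset, yielding the required cohomology-and-base-change identification $\pi_{\mu,*}\sO_{\widetilde{X}_\mu}(D_\mu)=\J((X,\Delta);\a^t)_\mu$. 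Once this step is in place, the spread-out, Matlis duality and trace-map computations described above are largely formal.
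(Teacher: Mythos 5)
First, a point of framing: the paper does not prove Theorem~\ref{mult thm} at all — it is stated as a cited result from \cite[Theorem 3.2]{Ta2} and \cite[Theorem 6.8]{HY}, so there is no ``paper's own proof'' to compare against. What follows compares your sketch with the arguments in those references.

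Your overall framework — spread out a fixed log resolution to a model over $A$, shrink $\Spec A$ so that reductions remain log resolutions and cohomology commutes with base change, then prove the two inclusions separately via Proposition-Definition~\ref{characterization}(c) and (a) respectively — is the right scaffolding and matches the structure of the known proofs. The direction $\tau_b(X_\mu,\Delta_\mu,\a_\mu^t)\subseteq\J((X,\Delta);\a^t)_\mu$ via trace maps and divisorial bookkeeping on $\widetilde{X}_\mu$, with Grauert–Riemenschneider/Kawamata–Viehweg vanishing feeding the base-change identification, is essentially correct.

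The genuine gap is in the reverse inclusion $\J((X,\Delta);\a^t)_\mu\subseteq\tau_b(X_\mu,\Delta_\mu,\a_\mu^t)$, which is the hard direction and is where the real content of the theorem lies. You claim the vanishing $s\cdot c\cdot\a_\mu^{\lceil t(q-1)\rceil}\cdot F^e_E(z)=0$ ``follows from the very definition of $0^{*(\Delta_\mu,\a_\mu^t)}_E$,'' but this is circular: the definition already gives $c\,\a_\mu^{\lceil t(q-1)\rceil}F^e_E(z)=0$, and multiplying by $s$ yields nothing new. What you actually need is that $s$ kills $z$ itself, i.e.\ $s\cdot z=0$ in $E$, and that does not follow formally from the definition. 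The missing ingredient is Hara's surjectivity theorem for Frobenius acting on cohomology of the resolution (N.~Hara, \emph{A characterization of rational singularities in terms of injectivity of Frobenius maps}, Amer.~J.~Math.~\textbf{120} (1998)), whose proof relies on the Deligne–Illusie decomposition of the de Rham complex in characteristic $p$. That theorem — not generic Serre vanishing — is the true source of the ``for a dense open set of $\mu$'' clause. Generic Serre vanishing handles the cohomology-and-base-change identification, which is needed, but it does not by itself give the Frobenius-theoretic statement that the tight-closure submodule $0^{*(\Delta_\mu,\a_\mu^t)}_E$ is annihilated by sections of $\pi_{\mu,*}\sO_{\widetilde{X}_\mu}(D_\mu)$. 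Without invoking Hara's theorem (or an equivalent input), the sketch does not close.
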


\section{A proof using big generalized test ideals}
In this section, we will give a subadditivity formula for multiplier ideals associated to log pairs,  using big generalized test ideals. 
We start with the following lemma. 
\begin{lem}\label{key lemma}
Let $X$ be an $F$-finite normal integral affine scheme of characteristic $p>0$ and $\Delta$ be an effective $\Q$-divisor on $X$.
Let $\a, \b \subseteq \sO_X$ be ideals and $s, t>0$ be real numbers.  
\renewcommand{\labelenumi}{$(\arabic{enumi})$}
\begin{enumerate}
\item For each integer $e \ge 1$, 
$$\tau_b(X, \a^s) F^e_*(\sO_X(-\lceil p^e \Delta \rceil))  \subseteq \tau_b(X, \Delta, \a^s) F^e_*\sO_X.$$ 
\item 
Let $\mathcal{I}_{\Delta}^{(\bullet)}=\{\sO_X(-\lceil m \Delta \rceil)\}$ be the graded family of ideals associated to $\Delta$. 
Then one has 
$$\tau_b(X, \a^s)^{*\a^s} \tau_b\left(X, \Delta, \a^s\b^t\mathcal{I}_{X}^{(\bullet)}\right) \subseteq \tau_b(X, \Delta, \a^s)\tau_b(X, \Delta, \b^t)$$
\end{enumerate}
\end{lem}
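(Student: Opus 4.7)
Both parts of the lemma reduce, after unwinding the Frobenius-twisted $\sO_X$-action on $F^e_*\sO_X$ (under which $\tau \cdot F^e_*\bar y = F^e_*(\tau^{p^e}\bar y)$), to ideal-theoretic containments in $\sO_X$. Concretely, (1) is equivalent to
$$\tau_b(X,\a^s)^{[p^e]} \cdot \sO_X(-\lceil p^e\Delta \rceil) \;\subseteq\; \tau_b(X,\Delta,\a^s)^{[p^e]}.$$
The plan is to establish (1) directly from characterization (c) of the big generalized test ideal, and then derive (2) by combining (1) with the known subadditivity formula for test ideals in the $\Delta=0$ setting from \cite{Ta1} and \cite{HT}.

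For (1), I first fix a big sharp test element $c$ that serves simultaneously for $(X,\a^s)$ and $(X,\Delta,\a^s)$, obtained by multiplying two individual test elements. By (c), every $\tau \in \tau_b(X,\a^s)$ arises locally from expressions $\psi(F^{e'}_*(cb))$ with $\psi \in \Hom_{\sO_X}(F^{e'}_*\sO_X,\sO_X)$ and $b \in \a^{\lceil s(p^{e'}-1)\rceil}$, while $\tau_b(X,\Delta,\a^s)$ is generated by the analogous $\phi(F^{e'}_*(cb))$ with $\phi \in \Hom_{\sO_X}(F^{e'}_*\sO_X(\lceil(p^{e'}-1)\Delta\rceil),\sO_X)$. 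Given such a $\psi$ and a section $\bar y \in \sO_X(-\lceil p^e\Delta\rceil)$, I would manufacture a new map
$$\phi \in \Hom_{\sO_X}\!\bigl(F^{e+e'}_*\sO_X(\lceil (p^{e+e'}-1)\Delta\rceil),\; \sO_X\bigr)$$
by combining the Frobenius pushforward $F^e_*\psi$ with the Frobenius-linear operation induced by $\bar y$, using Grothendieck duality (which identifies such Hom-modules with $F^{e'}_*\sO_X((1-p^{e'})K_X-D)$) to patch the dual sections. The numerical identity
$$p^{e+e'}-1 = p^e(p^{e'}-1) + (p^e-1),$$
together with the ceiling inequality $\lceil (p^{e+e'}-1)\Delta \rceil \le p^e\lceil (p^{e'}-1)\Delta \rceil + \lceil p^e \Delta \rceil$, is precisely what allows the divisor on the domain of $\phi$ to split correctly into the $(p^{e'}-1)$-piece handled by $\psi$ and the $p^e$-piece absorbed by $\bar y$. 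Evaluating the resulting $\phi$ on $F^{e+e'}_*(c\,b^{p^e}\,a)$ with $a \in \a^{\lceil s(p^e-1)\rceil}$ is designed to yield $\tau^{p^e}\bar y$, thereby placing it in $\tau_b(X,\Delta,\a^s)^{[p^e]}$.

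For (2), I would first use the identity $\tau_b(X,\Delta,\,\cdot\,) = \tau_b(X,\,\cdot\,\cdot\,\mathcal{I}_\Delta^{(\bullet)})$ (which follows from characterization (c) applied to the graded family $\mathcal{I}_\Delta^{(\bullet)}$ together with Remark \ref{remark}(2)) to recast the middle test ideal as a $\Delta=0$ test ideal for an enlarged graded family. The subadditivity formula of \cite{Ta1} and \cite{HT} for graded families then yields a decomposition of the shape $\tau_b(X,\a^s\b^t\mathcal{I}_\Delta^{(\bullet)}) \subseteq \tau_b(X,\a^s)\,\tau_b(X,\b^t\mathcal{I}_\Delta^{(\bullet)})$, after which part (1) is invoked to reattach the divisorial twist to each factor and recover $\tau_b(X,\Delta,\a^s)\,\tau_b(X,\Delta,\b^t)$ on the right. (I read $\mathcal{I}_X^{(\bullet)}$ in the statement as $\mathcal{I}_\Delta^{(\bullet)}$, and the leading factor $\tau_b(X,\a^s)^{*\a^s}$ as the $\a^s$-tight closure of $\tau_b(X,\a^s)$ in the sense of Definition \ref{tight closure def}(i), which is what the subadditivity step requires to close up.)

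The principal obstacle is the explicit construction of $\phi$ in (1): one must simultaneously verify that the proposed composite is $\sO_X$-linear, that its image lies in $\sO_X$ rather than in $F^e_*\sO_X$, and that its evaluation actually recovers $\tau^{p^e}\bar y$. The interplay of ceiling functions and iterated Frobenius pushforwards is delicate, and a single miscount in the ceiling arithmetic would break the exponent balance on which the whole scheme depends.
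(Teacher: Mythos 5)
Your reformulation of (1) as $\tau_b(X,\a^s)^{[p^e]}\cdot\sO_X(-\lceil p^e\Delta\rceil)\subseteq\tau_b(X,\Delta,\a^s)^{[p^e]}$ is the right reading, and your interpretation of the typo $\mathcal{I}_X^{(\bullet)}$ as $\mathcal{I}_\Delta^{(\bullet)}$ is also correct. But the core of your argument for (1) has a genuine gap. You propose to manufacture a single map $\phi\in\Hom_{\sO_X}\bigl(F^{e+e'}_*\sO_X(\lceil(p^{e+e'}-1)\Delta\rceil),\sO_X\bigr)$ whose evaluation on $F^{e+e'}_*(c\,b^{p^e}a)$ equals $\tau^{p^e}\bar y$, and then conclude that $\tau^{p^e}\bar y\in\tau_b(X,\Delta,\a^s)^{[p^e]}$. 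But by characterization (c) of Proposition--Definition~\ref{characterization}, what such an evaluation buys you is only membership in $\tau_b(X,\Delta,\a^s)$ itself, \emph{not} in its Frobenius bracket power $\tau_b(X,\Delta,\a^s)^{[p^e]}$: the sum $\sum_{e''}\sum_\phi\phi(F^{e''}_*(c\,\a^{\lceil s(p^{e''}-1)\rceil}))$ \emph{is} the test ideal, and nothing in your construction produces a sum of $p^e$-th powers of elements of $\tau_b(X,\Delta,\a^s)$. Indeed one cannot fix this by choosing a better $\phi$: there is no $\sO_X$-linear map $F^e_*\sO_X\to\sO_X$ sending $F^e_*x\mapsto x^{p^e}\bar y$, so the splitting you suggest via Grothendieck duality cannot output the element $\tau^{p^e}\bar y$ from an $\sO_X$-linear $\phi$ in the way you need. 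The missing bracket power is fatal for part (2), whose final line specifically requires the $[q]$-th power to kill $F^e_E(z)$. The paper's proof of (1) avoids this by \emph{not} raising the exponent to $e+e'$: for each $e'\ge e$ and $\phi_{e'}\in\Hom(F^{e'}_*\sO_X,\sO_X)$, multiplication by $F^e_*s$ (i.e.\ by $s^{p^{e'-e}}$) produces $\psi_{e'}=F^e_*s\cdot\phi_{e'}\in\Hom(F^{e'}_*\sO_X(\lceil(p^{e'}-1)\Delta\rceil),\sO_X)$ directly, because $s^{p^{e'-e}}\in\sO_X(-\lceil p^{e'}\Delta\rceil)$, and the $\sO_X$-module structure on $F^e_*\sO_X$ is what supplies the $p^e$-th powers; the index $e'$ never moves.

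For (2) your route is genuinely different from the paper's and is not tight enough to carry the argument. You propose to absorb $\Delta$ into the graded family, invoke the $\Delta=0$ subadditivity from \cite{Ta1}, and then use (1) to ``reattach the divisorial twist to each factor.'' Two problems. First, rewriting both $\tau_b(X,\Delta,\a^s)$ and $\tau_b(X,\Delta,\b^t)$ as $\Delta=0$ test ideals requires a copy of $\mathcal{I}_\Delta^{(\bullet)}$ in \emph{each} factor, but the source $\tau_b(X,\Delta,\a^s\b^t\mathcal{I}_\Delta^{(\bullet)})$ carries the divisorial family only once (plus the twist from $\Delta$), so the bookkeeping of ceilings does not balance. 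Second, and more fundamentally, (1) moves the twist in the direction $\tau_b(X,\a^s)^{[q]}\sO_X(-\lceil q\Delta\rceil)\subseteq\tau_b(X,\Delta,\a^s)^{[q]}$; it does not let you peel a factor of $\mathcal{I}_\Delta^{(\bullet)}$ off the inside of a $\Delta=0$ test ideal and promote it to a $\Delta$-test ideal on the right-hand side, which is what ``reattach'' would require. The paper instead localizes to a complete local ring, applies local duality, and verifies the Matlis-dual statement $\bigl(0^{*(\Delta,\a^s\b^t\mathcal{I}_\Delta^{(\bullet)})}_E\colon\tau_b(X,\a^s)^{*\a^s}\bigr)\supseteq\bigl(0^{*(\Delta,\b^t)}_E\colon\tau_b(X,\Delta,\a^s)\bigr)$ by a direct chain of containments in which the definition of $\a^s$-tight closure of $\tau_b(X,\a^s)$ and part (1) each appear once; the factor $\tau_b(X,\a^s)^{*\a^s}$ is not there to ``close up'' an external subadditivity, but to provide the $c'$ making $c'\a^{\lceil s(q-1)\rceil}(\tau_b(X,\a^s)^{*\a^s})^{[q]}\subseteq\tau_b(X,\a^s)^{[q]}$.
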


\begin{proof}
(1) Let $c \in \sO_X$ be a big sharp test element for both $(X, \a^t)$ and $(X, \Delta, \a^t)$. 
By Proposition-Definition \ref{characterization}, 
\begin{align*}
&\tau_b(X, \a^s) F^{e}_*(\sO_X(-\lceil p^{e}\Delta \rceil))\\
=&\sum_{e' \ge e}\sum_{\phi_{e'}}F^e_*(\sO_X(-\lceil p^e\Delta \rceil))\phi_{e'}(F^{e'}_*(c \a^{\lceil t(p^{e'}-1) \rceil})),
\end{align*}
where $\phi_{e'}$ ranges over all elements of $\Hom_{\sO_X}(F^{e'}_*\sO_{X}, \sO_X)$. 
For all elements $s \in \sO_X(-\lceil p^{e}\Delta \rceil)$, since $s^{p^{e'-e}} \in \sO_X(-\lceil p^{e'}\Delta \rceil)$,  
$$F^e_*s \cdot \phi_{e'}: F^{e'}_*\sO_X(\lceil (p^{e'}-1) \Delta \rceil) \xrightarrow{F^e_*s} F^{e'}_*\sO_X \xrightarrow{\phi_{e'}} \sO_X$$
is viewed as an element of $\Hom_{\sO_X}(F^{e'}_*\sO_{X}(\lceil (p^{e'}-1) \Delta \rceil), \sO_X)$.
Thus, applying Proposition-Definition \ref{characterization} again, one has 
\begin{align*}
&\sum_{e' \ge e}\sum_{\phi_{e'}}F^e_*(\sO_X(-\lceil p^e\Delta \rceil))\phi_{e'}(F^{e'}_*(c \a^{\lceil t(p^{e'}-1) \rceil}))\\
\subseteq & \sum_{e' \ge e}\sum_{\psi_{e'}}\psi_{e'}(F^{e'}_*(c \a^{\lceil t(p^{e'}-1) \rceil}))\\
=& \tau_b(X, \Delta, \a^t),
\end{align*}
where $\psi_{e'}$ ranges over all elements of $\Hom_{\sO_X}(F^{e'}_*\sO_{X}(\lceil (p^{e'}-1) \Delta \rceil), \sO_X)$. 

 (2)
 Since the formation of big generalized test ideals commutes with localization and completion (see \cite[Propositions 3.1 and 3.2]{HT}), we may assume that $(X, x)=\Spec R$, where $(R, \m)$ is a $d$-dimensional complete normal local ring of characteristic $p>0$. 
 Let $E=H^d_x(\omega_X)$ be the $d$-th local cohomology module of $\omega_X$ with support on $x$ and let $F^e_E:E=H^d_x(\sO_X(K_X)) \to  H^d_x(\sO_X(p^eK_X))$
be the map induced by the $e$-times iterated Frobenius map $F^e: \sO_{X} \to F^e_*\sO_{X}$.  
 Then by local duality, the assertion is equivalent to saying that 
$$\left(0^{*\left(\Delta, \a^s\b^t\mathcal{I}_{\Delta}^{(\bullet)}\right)}_E \colon \tau_b(X, \a^s)^{*\a^s}\right)_E \supseteq \left(0^{*(\Delta, \b^t)}_E: \tau_b(X, \Delta, \a^s)\right)_E.$$
Let $z \in \left(0^{*(\Delta, \b^t)}_E: \tau_b(R, \Delta, \a^s)\right)_E$. 
Then there exists a nonzero element $c \in \sO_X$ such that 
$$c\b^{\lceil t(q-1) \rceil}\tau_b(X, \Delta, \a^s)^{[q]}F^e_E(z)=0\textup{ in }H^d_x(\sO_X(qK_X+\lceil (q-1) \Delta \rceil))$$ for all large $q=p^e$. 
Fix any nonzero element $\delta \in \sO_X(-\lceil \Delta \rceil)$. 
 By the definition of $\a^s$-tight closure and (1), there exists another nonzero element $c' \in \sO_X$ such that 
\begin{align*}
c' \delta \a^{\lceil s(q-1) \rceil}\sO_X(-\lceil (q-1) \Delta \rceil) (\tau_b(X, \a^s)^{*\a^s})^{[q]} & \subseteq 
c' \a^{\lceil s(q-1) \rceil}\sO_X(-\lceil q \Delta \rceil) (\tau_b(X, \a^s)^{*\a^s})^{[q]}\\ 
&\subseteq \sO_X(-\lceil q \Delta \rceil)\tau_b(X, \a^s)^{[q]}\\
&\subseteq \tau_b(X, \Delta, \a^s)^{[q]}
\end{align*}
for all large $q=p^e$. 
Therefore, one has 
$$cc'\delta \a^{\lceil s(q-1) \rceil} \b^{\lceil t(q-1) \rceil}\sO_X(-\lceil (q-1) \Delta \rceil)(\tau_b(X, \a^s)^{*\a^s})^{[q]}F^e_E(z)=0$$
in $H^d_x(\sO_X(qK_X+\lceil (q-1) \Delta \rceil))$ for all large $q=p^e$. 
That is, $\tau_b(X, \a^s)^{*\a^s}z \subseteq 0^{*\left(\Delta, \a^s\b^t, \mathcal{I}_{\Delta}^{(\bullet)}\right)}_E$. 
\end{proof}

As a consequence of the above lemma, we obtain a subadditivity formula for big generalized test ideals. We stress that $K_X+\Delta$ is not necessarily $\Q$-Cartier in Proposition \ref{charp}. 

\begin{prop}\label{charp}
Let $X$ be a normal integral scheme essentially of finite type over an F-finite field and $\Delta$ be an effective $\Q$-divisor on $X$. 
Let $\mathcal{I}_{\Delta}^{(\bullet)}=\{\sO_X(-\lceil m \Delta \rceil)\}$ denote the graded family of ideal sheaves associated to $\Delta$ and $\mathrm{Jac}_X$ denote the Jacobian ideal sheaf of $X$.
Then 
$$\mathrm{Jac}_X \cdot \tau_b\left(X, \Delta, \a^s\b^t \mathcal{I}_{\Delta}^{(\bullet)}\right) \subseteq \tau_b(X, \Delta, \a^s)\tau_b(X, \Delta, \b^t)$$
for any ideal sheaves $\a, \b \subseteq \sO_X$ and for any real numbers $s, t>0$. 
\end{prop}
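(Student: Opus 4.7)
The plan is to use Lemma~\ref{key lemma}(2) as the engine and reduce the whole statement to a Jacobian-in-test-ideal containment. Indeed, part (2) of Lemma~\ref{key lemma} already gives
$$\tau_b(X, \a^s)^{*\a^s} \cdot \tau_b\bigl(X, \Delta, \a^s\b^t\mathcal{I}_{\Delta}^{(\bullet)}\bigr) \subseteq \tau_b(X, \Delta, \a^s)\,\tau_b(X, \Delta, \b^t),$$
so Proposition~\ref{charp} reduces to the single containment
$$\mathrm{Jac}_X \subseteq \tau_b(X, \a^s)^{*\a^s}. \qquad (\star)$$
Geometrically, $(\star)$ is the $\a^s$-tight-closure analogue of the classical Hochster--Huneke--Lipman--Sathaye fact that Jacobian elements are test elements.

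To prove $(\star)$, I would first use the compatibility of big generalized test ideals with localization and completion (Remark~\ref{remark}(3) and the cited references from \cite{HT}) to reduce to the case in which $X = \Spec R$ with $R$ a complete $F$-finite normal local domain. Fix $j \in \mathrm{Jac}_X$; by Noether normalization, pick a module-finite extension $A \hookrightarrow R$ with $A$ regular. The Lipman--Sathaye theorem then lets us multiply $j$ into the conductor of $R$ over $A$, which in turn, via the sum-over-Frobenius description in Proposition-Definition~\ref{characterization}(c), yields a single nonzero $c \in R$ with
$$c\,\a^{\lceil s(q-1)\rceil} j^q \subseteq \tau_b(X, \a^s)^{[q]}$$
for all large $q = p^e$. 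By the definition of $\a^s$-tight closure in Definition~\ref{tight closure def}(i), this is exactly the membership $j \in \tau_b(X, \a^s)^{*\a^s}$, which is $(\star)$.

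The main obstacle is packaging the Lipman--Sathaye input in the asymmetric, $\a^s$-twisted setting: in the earlier $\Q$-Gorenstein work \cite{Ta1} the analogue of $(\star)$ was already handled this way, and it is crucial that the containment $(\star)$ involves no $\Delta$. This is precisely why Lemma~\ref{key lemma}(2) was engineered to absorb the entire boundary contribution $\mathcal{I}_\Delta^{(\bullet)}$ into the right-hand-side factor $\tau_b(X, \Delta, \a^s)\tau_b(X, \Delta, \b^t)$: the only place where a Jacobian correction must appear is the pure-$\a^s$ step $(\star)$, which is accessible by classical Lipman--Sathaye arguments. Combining $(\star)$ with Lemma~\ref{key lemma}(2) then yields Proposition~\ref{charp} verbatim.
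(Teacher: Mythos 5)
Your proof is correct and follows essentially the same route as the paper: reduce to the affine case, invoke Lemma~\ref{key lemma}(2), and conclude from the single containment $\mathrm{Jac}_X \subseteq \tau_b(X,\a^s)^{*\a^s}$. The only difference is that the paper cites \cite[Lemma 2.6]{Ta1} for that last inclusion rather than re-deriving it via Lipman--Sathaye as you sketch.
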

\begin{proof}
The question is local, so we may assume that $X$ is affine. 
Since $\mathrm{Jac}_X\subseteq \tau_b(X, \a^s)^{*\a^s}$ by \cite[Lemma 2.6]{Ta1}, 
the assertion immediately follows from Lemma \ref{key lemma} (2). 
\end{proof}

Before formulating a subadditivity property of multiplier ideals, we recall the definition of asymptotic multiplier ideal sheaves. 
Let $\Delta$ be an effective $\Q$-divisor on a normal variety $X$ over a field of characteristic zero such that $K_X+\Delta$ is $\Q$-Cartier. 
Let $a_{\bullet}=\{\a_m\}$ be a graded family of ideal sheaves on $X$. 
Then the \textit{asymptotic multiplier ideal sheaf} $\J((X, \Delta); \a_{\bullet})$ of $\a_{\bullet}$ for the pair $(X, \Delta)$ is defined to be the
unique maximal member among the family of ideal sheaves $\{\J((X, \Delta); \a_m^{1/m})\}$ with respect
to inclusion. We refer the reader to \cite[Chapter 10]{La} for details.

\begin{thm}\label{main}
Let $X$ be a normal variety over a field of characteristic zero and $\Delta$ be an effective $\Q$-divisor on $X$ such that $K_X+\Delta$ is $\Q$-Cartier. 
Let $\mathcal{I}_{\Delta}^{(\bullet)}=\{\sO_X(-\lceil m \Delta \rceil)\}$ denote the graded family of ideal sheaves associated to $\Delta$ and let $\mathrm{Jac}_X$ denote the Jacobian ideal sheaf of $X$.
Then
$$\mathrm{Jac}_X \cdot \J\left((X, \Delta); \a^s\b^t\mathcal{I}_{\Delta}^{(\bullet)}\right) \subseteq \J((X, \Delta); \a^s)\J((X, \Delta); \b^t)$$
for any ideal sheaves $\a, \b \subseteq \sO_X$ and for any real numbers $s, t>0$. 
\end{thm}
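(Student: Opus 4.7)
The plan is to prove Theorem \ref{main} by reducing to positive characteristic and invoking Proposition \ref{charp}, with Theorem \ref{mult thm} serving as the bridge between multiplier ideals and big generalized test ideals.

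First I would reduce to the affine case, say $X = \Spec R$ with $R$ of finite type over the base field $k$. Since $\mathcal{I}_\Delta^{(\bullet)}$ is a descending graded family, the asymptotic multiplier ideal $\J((X, \Delta); \a^s\b^t\mathcal{I}_\Delta^{(\bullet)})$ is by definition the unique maximal member of the collection $\{\J((X, \Delta); \a^s\b^t\sO_X(-\lceil m\Delta\rceil)^{1/m})\}_{m \ge 1}$, and by Noetherianity this maximum is realized at some fixed integer $m_0$. Hence it suffices to prove
$$\mathrm{Jac}_X \cdot \J\bigl((X, \Delta); \a^s\b^t\sO_X(-\lceil m_0\Delta\rceil)^{1/m_0}\bigr) \subseteq \J((X, \Delta); \a^s)\,\J((X, \Delta); \b^t).$$

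Next I would choose a common model $(X_A, \Delta_A, \a_A, \b_A)$ of $(X, \Delta, \a, \b)$ over a finitely generated $\Z$-subalgebra $A \subseteq k$, shrinking $A$ if necessary so that $\mathrm{Jac}_X$ specializes to $\mathrm{Jac}_{X_\mu}$ and so that all divisorial data behave well under base change. Applying Theorem \ref{mult thm} (together with its extension to the mixed graded families of Remark \ref{remark}(1)) to each of the three multiplier ideals appearing above yields a dense open $W \subseteq \Spec A$ such that, for every closed point $\mu \in W$, each of these multiplier ideals coincides, after reduction mod $\mu$, with the corresponding big generalized test ideal on $(X_\mu, \Delta_\mu)$.

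For each such $\mu$, Proposition \ref{charp} applied to $(X_\mu, \Delta_\mu)$ gives
$$\mathrm{Jac}_{X_\mu} \cdot \tau_b\bigl(X_\mu, \Delta_\mu, \a_\mu^s\b_\mu^t\mathcal{I}_{\Delta_\mu}^{(\bullet)}\bigr) \subseteq \tau_b(X_\mu, \Delta_\mu, \a_\mu^s)\,\tau_b(X_\mu, \Delta_\mu, \b_\mu^t).$$
By Remark \ref{remark}(2), the asymptotic test ideal on the left contains $\tau_b(X_\mu, \Delta_\mu, (\a_\mu^{\lceil sm_0\rceil}\b_\mu^{\lceil tm_0\rceil}\sO_{X_\mu}(-\lceil m_0\Delta_\mu\rceil))^{1/m_0})$, which by Theorem \ref{mult thm} is exactly the reduction of the finite-$m_0$ multiplier ideal appearing on the left of our reduced inclusion. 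Thus the desired inclusion holds after reduction mod $\mu$ for every $\mu \in W$, and a standard spreading-out argument then promotes this to the claimed inclusion of coherent sheaves on $X$.

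The main technical point to watch is the mixed asymptotic expression $\a^s\b^t\mathcal{I}_\Delta^{(\bullet)}$: I must verify that the Noetherian realization of the max at a single $m_0$ in characteristic zero is compatible with the Frobenius-based asymptotic maximality of Remark \ref{remark}(2) in characteristic $p$, and that Theorem \ref{mult thm} can be applied uniformly to all three multiplier ideals on a common open $W$. Once this bookkeeping is in place, the rest of the argument is a formal transcription of the characteristic-$p$ formula Proposition \ref{charp} to characteristic zero.
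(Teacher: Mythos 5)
Your proposal follows essentially the same path as the paper's proof: realize the asymptotic multiplier ideal at a single large divisible $m_0$, reduce mod $p$ via Theorem \ref{mult thm} over a common open $W \subseteq \Spec A$, apply Proposition \ref{charp} to the fibers, use Remark \ref{remark}(2) to compare the finite-$m_0$ and asymptotic test ideals, and spread out. The bookkeeping concern you flag at the end is exactly what the paper's chain of containments handles, so your argument is correct and not a genuinely different route.
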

\begin{proof}
Take sufficiently large and divisible $m$ such that 
$$\J\left((X, \Delta); \a^s\b^t\mathcal{I}_{\Delta}^{(\bullet)}\right)=\J((X, \Delta); \a^s\b^t \sO_X(-\lceil m \Delta \rceil)^{1/m}).$$
It follows from a combination of Remark \ref{remark}, Theorem \ref{mult thm} and Proposition \ref{charp} that for a model $(X_A, \Delta_A, \a_A, \b_A)$ over a finitely generated $\Z$-subalgebra $A$ of $k$, there exists an open subset $W \subseteq \Spec A$ such that  
\begin{align*}
&(\mathrm{Jac}_X)_{\mu} \cdot \J((X, \Delta); \a^s\b^t \sO_X(-\lceil m \Delta \rceil)^{1/m})_{\mu}\\
=&\mathrm{Jac}_{X_{\mu}} \cdot \tau_b(X_{\mu}, \Delta_{\mu}, \a_{\mu}^s\b_{\mu}^t \sO_{X_{\mu}}(-\lceil m \Delta_{\mu} \rceil)^{1/m})\\
\subseteq& \mathrm{Jac}_{X_{\mu}} \cdot \tau_b\left(X_{\mu}, \Delta_{\mu}, \a_{\mu}^s\b_{\mu}^t \mathcal{I}_{\Delta}^{(\bullet)}\right)\\
\subseteq & \tau_b(X_{\mu}, \Delta_{\mu}, \a_{\mu}^s) \cdot \tau_b(X_{\mu}, \Delta_{\mu}, \b_{\mu}^t)\\
=&  \J((X, \Delta); \a^s)_{\mu} \cdot \J((X, \Delta); \b^t)_{\mu}
\end{align*}
for all closed points $\mu \in W$. 
This implies that 
$$\mathrm{Jac}_X \cdot \J((X, \Delta); \a^s\b^t \sO_X(-\lceil m \Delta \rceil)^{1/m}) \subseteq  \J((X, \Delta); \a^s)\J((X, \Delta); \b^t).$$
\end{proof}

\section{An algebro-geometric proof}
In this section, employing the same methods as those used in \cite{Ei}, 
we give an algebro-geometric proof of the above subadditivity formula for multiplier ideals. 
Throughout this section, let $X$ be a $d$-dimensional normal variety over an algebraically closed field of characteristic zero and $\Delta$ be an effective $\Q$-divisor on $X$ such that $K_X+\Delta$ is $\Q$-Cartier. 
For a closed subscheme $Z$ of $X$, we denote by $\mathcal{I}_Z \subseteq \sO_X$ the defining ideal sheaf of $Z$ in $X$. 

First we recall the definition of factorizing embedded resolutions. 
\begin{defn}
Let $Z$ be a reduced closed subscheme of $X$ which is not contained in the singular locus $\mathrm{Sing}(X)$ of $X$. 
A \textit{factorizing embedded resolution} of $Z$ in $X$ is a proper birational morphism $f: \overline{X} \to X$ with $\overline{X}$ smooth such that
\renewcommand{\labelenumi}{(\alph{enumi})}
\begin{enumerate}
\item
$f$ is an isomorphism at every generic point of $Z \subseteq X$, 
\item
the exceptional locus $\mathrm{Exc}(f)$ is a simple normal crossing divisor, 
\item
the strict transform $\overline{Z}$ of $Z$ in $\overline{X}$ is smooth and has simple normal crossings with $\mathrm{Exc}(f)$, 
\item $\mathcal{I}_Z \sO_{\overline{X}}=\mathcal{I}_{\overline{Z}}\sO_{\overline{X}}(-R_Z)$ where $R_Z$ is an $f$-exceptional divisor on $\overline{X}$. 
\end{enumerate}
Such a resolution always exists (see \cite{BEV}). 
\end{defn}

\begin{lem}[\textup{cf.~\cite[Lemma 3.6]{Ei}}]\label{surjective}
Let $\a \subseteq \sO_X$ be an ideal sheaf and $t>0$ be a real number. 
Let $Z \subseteq X$ be a reduced equidimensional closed subscheme of codimension $c$, none of whose components is contained in $\mathrm{Sing}(X) \cup \mathrm{Supp}(\Delta) \cup \mathrm{Supp}(V(\a))$. 
Let $f: \overline{X} \to X$ be a log resolution of $(X, \Delta, \a)$ which is simultaneously a factorizing embedded resolution of $Z$ in $X$ so that $\mathcal{I}_{Z} \sO_{\overline{X}}=\mathcal{I}_{\overline{Z}}\sO_{\overline{X}}(-R_Z)$, where $\overline{Z}$ is the strict transform of $Z$ in $\overline{X}$. 
Put
$$B:=\lceil K_{\overline{X}}-f^*(X_X+\Delta)-t \cdot f^{-1}(V(\a))-c \cdot R_Z\rceil.$$
Then the restriction map
$$f_*\sO_{\overline{X}}(B) \to {f|_{\overline{Z}}}_*\sO_{\overline{Z}}(B|_{\overline{Z}})$$
is surjective. 
\end{lem}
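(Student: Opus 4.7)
The plan is to reduce the surjectivity of the restriction map to a vanishing of the form $R^1 f_*(\,\cdot\,) = 0$ and then to establish that vanishing via a relative Kawamata--Viehweg theorem, after first blowing up $\overline{Z}$ so that it becomes a divisor.

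First I would consider the short exact sequence on $\overline{X}$
$$0 \to \mathcal{I}_{\overline{Z}} \otimes \sO_{\overline{X}}(B) \to \sO_{\overline{X}}(B) \to \sO_{\overline{Z}}(B|_{\overline{Z}}) \to 0,$$
push it forward by $f$, and read off its long exact sequence; surjectivity of the restriction map then follows from $R^1 f_*\bigl(\mathcal{I}_{\overline{Z}} \otimes \sO_{\overline{X}}(B)\bigr) = 0$. If $c \ge 2$ the ideal sheaf $\mathcal{I}_{\overline{Z}}$ is not invertible, so I first make $\overline{Z}$ into a divisor: let $\sigma : X' \to \overline{X}$ be the blow-up along the smooth center $\overline{Z}$, with exceptional divisor $E$, and let $g := f \circ \sigma$. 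The factorizing embedded resolution assumption guarantees that $\overline{Z}$ has simple normal crossings with $\mathrm{Exc}(f) \cup \mathrm{Supp}(f^{-1}_*\Delta) \cup \mathrm{Supp}(F)$, so $X'$ is smooth and $g$ is again a log resolution of $(X, \Delta, \a)$. The standard blow-up identities $\sigma_* \sO_{X'}(-E) = \mathcal{I}_{\overline{Z}}$ and $R^i \sigma_* \sO_{X'}(-E) = 0$ for $i \ge 1$, combined with the projection formula and Leray, yield
$$R^1 f_*\bigl(\mathcal{I}_{\overline{Z}} \otimes \sO_{\overline{X}}(B)\bigr) \;\cong\; R^1 g_* \sO_{X'}(\sigma^* B - E).$$

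Next I would compute $\sigma^* B - E - K_{X'}$. Using $K_{X'} = \sigma^* K_{\overline{X}} + (c-1) E$ and pulling integer divisors inside the round-up, I obtain
$$\sigma^* B - E \;=\; K_{X'} + \lceil M \rceil, \qquad M \;:=\; -g^*(K_X+\Delta) - t\,\sigma^* F - c(\sigma^* R_Z + E),$$
the commutation of $\lceil \cdot \rceil$ with $\sigma^*$ here being a consequence of the transversality of $\overline{Z}$ to the SNC support of the fractional part of $-f^*(K_X+\Delta) - tF$. For $g$-nefness of $M$, I would verify three things: the term $-g^*(K_X+\Delta)$ is $g$-numerically trivial; the term $-t\,\sigma^* F$ is $g$-nef, since $\a\sO_{X'} = \sO_{X'}(-\sigma^* F)$ is $g$-globally generated; and using
$$\mathcal{I}_Z \sO_{X'} \;=\; \sigma^*\bigl(\mathcal{I}_{\overline{Z}} \sO_{\overline{X}}(-R_Z)\bigr) \;=\; \sO_{X'}(-\sigma^* R_Z - E),$$
the line bundle $\sO_{X'}(-\sigma^* R_Z - E)$ is $g$-globally generated by the ideal $\mathcal{I}_Z$, so $-c(\sigma^* R_Z + E)$ is also $g$-nef. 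The fractional part $\{M\}$ has SNC support because $g$ is a log resolution.

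Finally, relative Kawamata--Viehweg vanishing for the proper birational morphism $g$ (which in the birational setting requires only $g$-nefness together with SNC fractional part, cf.\ \cite[Theorem 9.4.17]{La}) gives
$$R^1 g_* \sO_{X'}(K_{X'} + \lceil M \rceil) = 0,$$
which is the vanishing we needed. The hardest part of the argument is the bookkeeping that packages $\sigma^* B - E$ as $K_{X'} + \lceil M \rceil$ and justifies the commutation of the round-up with $\sigma^*$ after the blow-up; the factorizing embedded resolution hypothesis is precisely what supplies the transversality needed for that step and for $g$ to remain a log resolution.
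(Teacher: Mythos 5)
Your proof is correct and follows essentially the same route as the paper: both reduce the surjectivity to the vanishing of $R^1 f_*\bigl(\mathcal{I}_{\overline{Z}} \otimes \sO_{\overline{X}}(B)\bigr)$, both blow up the smooth center $\overline{Z}$ so that the ideal sheaf becomes a line bundle, and both then rewrite the resulting twist as a multiplier-ideal-type divisor and apply relative Kawamata--Viehweg vanishing together with Leray. The only cosmetic difference is that you invoke the standard blow-up vanishing $R^i\sigma_*\sO_{X'}(-E)=0$ to degenerate the Leray spectral sequence, whereas the paper gets that same vanishing by applying Kawamata--Viehweg to the blow-up morphism itself.
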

\begin{proof}
It suffices to show that $R^1f_*\left(\mathcal{I}_{\overline{Z}}\sO_{\overline{X}}(B)\right)=0$. 
Let $g: Y \to \overline{X}$ be the blow-up of $\overline{X}$ along $\overline{Z}$ with reduced exceptional divisor $E$, and denote by $h=(g \circ f):Y \to X$ the composite morphism. Since 
\begin{align*}
(\mathcal{I}_{\overline{Z}}\sO_Y)g^*\sO_{\overline{X}}(B)
&=\sO_Y(\lceil g^*K_{\overline{X}}-h^*(K_X+\Delta)-t \cdot h^{-1}(V(\a))-c \cdot g^*R_Z -E\rceil)\\
&=\sO_Y(\lceil K_{Y}-h^*(K_X+\Delta)-t \cdot h^{-1}(V(\a))-c \cdot h^{-1}(X) \rceil),
\end{align*}
it follows from Kawamata--Viehweg vanishing theorem that 
$$R^ih_*\left((\mathcal{I}_{\overline{Z}}\sO_Y)g^*\sO_{\overline{X}}(B) \right)=R^ig_*\left((\mathcal{I}_{\overline{Z}}\sO_Y)g^*\sO_{\overline{X}}(B) \right)=0$$ for all $i>0$. 
We use the Leray spectral sequence to conclude that 
$$R^if_*\left(\mathcal{I}_{\overline{Z}}\sO_{\overline{X}}(B) \right)=R^if_*\left(g_*((\mathcal{I}_{\overline{Z}}\sO_Y)g^*\sO_{\overline{X}}(B))\right)=0$$
for all $i>0$.
\end{proof}

\begin{defn}\label{char 0 def}
Given any positive integer $r$ such that $r(K_X+\Delta)$ is Cartier, consider the natural map 
$$\rho_{r, \Delta}: (\Omega_X^d)^{\otimes r} \to \sO_X(rK_X) \to \sO_X(r(K_X+\Delta)).$$
Let $\mathcal{I}_{r, \Delta} \subseteq \sO_X$ be the ideal sheaf so that $\Im \; \rho_{r, \Delta}=\mathcal{I}_{r, \Delta} \sO_X(r(K_X+\Delta))$. 
Note that if $\mathrm{Jac}_X$ is the Jacobian ideal sheaf of $X$, then $\mathrm{Jac}_X^r \cdot \sO_X(-r\Delta) \subseteq \mathcal{I}_{r, \Delta}$.
\end{defn}

\begin{lem}[\textup{cf.~\cite[Lemma 4.5]{Ei}}]\label{Jacobian}
Let $f: \overline{X} \to X$ be a birational morphism with $X$ smooth and $\mathrm{Jac}_f$ be the Jacobian ideal sheaf of $f$. 
Given an integer $r \ge 1$ such that $r (K_X+\Delta)$ is Cartier, One has
$$\mathrm{Jac}_f^r=\mathcal{I}_{r, \Delta} \sO_{\overline{X}}(-r(K_{\overline{X}}-f^*(K_X+\Delta))).$$
\end{lem}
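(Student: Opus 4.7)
The strategy is to compare two morphisms out of $f^{*}(\Omega_{X}^{d})^{\otimes r}$ and read off the identity by matching images inside a common rational line bundle. The first is the pullback
$$\psi \;:=\; f^{*}\rho_{r,\Delta} \;\colon\; f^{*}(\Omega_{X}^{d})^{\otimes r} \longrightarrow \sO_{\overline{X}}(rf^{*}(K_{X}+\Delta)),$$
whose image equals $\mathcal{I}_{r,\Delta}\sO_{\overline{X}}\cdot\sO_{\overline{X}}(rf^{*}(K_{X}+\Delta))$ by pulling back Definition \ref{char 0 def}. The second is the $r$-th tensor power of the natural top-wedge pullback of K\"ahler differentials,
$$\alpha^{\otimes r}\;\colon\; f^{*}(\Omega_{X}^{d})^{\otimes r} \longrightarrow (\Omega_{\overline{X}}^{d})^{\otimes r} = \sO_{\overline{X}}(rK_{\overline{X}}),$$
whose image equals $\mathrm{Jac}_{f}^{r}\cdot\sO_{\overline{X}}(rK_{\overline{X}})$: indeed, since $\overline{X}$ is smooth, $\mathrm{Jac}_{f}$ is characterized by the identity $\mathrm{Jac}_{f}\cdot\sO_{\overline{X}}(K_{\overline{X}}) = \Im(\alpha\colon f^{*}\Omega_{X}^{d}\to \Omega_{\overline{X}}^{d})$, and tensor powers preserve this structure.

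Both target line bundles embed as invertible subsheaves of the constant sheaf $\mathcal{K}$ of rational sections of $\omega_{\overline{X}}^{\otimes r}$, and inside $\mathcal{K}$ they are related by the Cartier twist
$$\sO_{\overline{X}}(rK_{\overline{X}}) \;=\; \sO_{\overline{X}}\bigl(r(K_{\overline{X}} - f^{*}(K_{X}+\Delta))\bigr)\cdot\sO_{\overline{X}}(rf^{*}(K_{X}+\Delta)),$$
which is a bona fide equality of invertible subsheaves of $\mathcal{K}$ since $\overline{X}$ is smooth and $r(K_{X}+\Delta)$ is Cartier. The key claim is that $\psi$ and $\alpha^{\otimes r}$ coincide as morphisms into $\mathcal{K}$. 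Granted this, equating images yields
$$\mathrm{Jac}_{f}^{r}\cdot\sO_{\overline{X}}(rK_{\overline{X}}) = \mathcal{I}_{r,\Delta}\sO_{\overline{X}}\cdot\sO_{\overline{X}}(rf^{*}(K_{X}+\Delta)),$$
and tensoring with $\sO_{\overline{X}}(-rf^{*}(K_{X}+\Delta))$ gives the statement of the lemma.

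To verify the claim, restrict first to the open set $\overline{X}^{\circ} := f^{-1}(X^{\circ})$, where $X^{\circ}\subseteq X$ denotes the smooth locus. There $f$ is a birational morphism of smooth varieties, $(\Omega_{X}^{d})^{\otimes r}|_{X^{\circ}} = \omega_{X^{\circ}}^{\otimes r}$, and both $\psi$ and $\alpha^{\otimes r}$ reduce to the pullback of top forms $\omega \mapsto f^{*}\omega$ inside $\mathcal{K}$. Since $\mathcal{K}$ is torsion-free, both morphisms factor through the torsion-free quotient of the coherent sheaf $f^{*}(\Omega_{X}^{d})^{\otimes r}$, and on the smooth integral scheme $\overline{X}$ any morphism from a torsion-free coherent sheaf to $\mathcal{K}$ is determined by its restriction to an open dense subset. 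Hence $\psi = \alpha^{\otimes r}$ as maps into $\mathcal{K}$ on all of $\overline{X}$. I expect the principal technical obstacle to be precisely this comparison step—in particular, carefully unpacking the double-dualization built into $\rho_{r,\Delta}$ at points over $X\setminus X^{\circ}$ and verifying that it corresponds on the nose to the top-wedge pullback on the smooth $\overline{X}$ once transported to the common rational target $\mathcal{K}$.
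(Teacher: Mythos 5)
Your argument is correct and follows essentially the same route as the paper's: both compare the two natural maps out of $f^{*}(\Omega_{X}^{d})^{\otimes r}$ whose images compute $\mathrm{Jac}_{f}^{r}\cdot\sO_{\overline{X}}(rK_{\overline{X}})$ and $\mathcal{I}_{r,\Delta}\sO_{\overline{X}}\cdot\sO_{\overline{X}}(rf^{*}(K_{X}+\Delta))$ respectively. The only packaging difference is that the paper writes $r(K_{\overline{X}}-f^{*}(K_{X}+\Delta))=K_{+}-K_{-}$ with $K_{\pm}$ effective and twists by $-K_{-}$ so that both maps land in an honest subsheaf of $\sO_{\overline{X}}(rK_{\overline{X}})$, whereas you work inside the constant sheaf of rational sections and verify the agreement by restriction to a dense open --- a step the paper's commutative diagram leaves implicit.
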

\begin{proof}
First note that by the definition of $\mathrm{Jac}_f$, the image of the natural map $f^*(\Omega_X^d)^{\otimes r} \to \sO_{\overline{X}}(rK_{\overline{X}})$ coincides with $\mathrm{Jac}_f^r \cdot \sO_{\overline{X}}(rK_{\overline{X}})$.
Consider the decomposition $r(K_{\overline{X}}-f^*(K_X+\Delta))=K_{+}-K_{-}$, where $K_+, K_-$ are effective divisors on $\overline{X}$. 
Then we have the following commutative diagram: 
\[
\xymatrix{
f^*(\Omega_X^d)^{\otimes r} \otimes \sO_{\overline{X}}(-K_-) \ar[r] \ar[dr] & \sO_{\overline{X}}(rK_{\overline{X}})\\
& f^*\sO_X(r(K_X+\Delta)) \otimes \sO_{\overline{X}}(-K_-). \ar@{^{(}->}[u]
}
\]
Computing the images of these maps, we see that 
$$\mathrm{Jac}_f^r \cdot \sO_{\overline{X}}(-K_-)=\mathcal{I}_{r, \Delta}\sO_{\overline{X}}(-K_+),$$
which gives the assertion. 
\end{proof}

Now we state a subadditivity formula for multiplier ideals involving the ideal sheaf $\mathcal{I}_{r, \Delta}$. 
\begin{thm}\label{main2}
Let $X$ be a normal variety over an algebraically closed field of characteristic zero and $\Delta$ be an effective $\Q$-divisor on $X$ such that $r(K_X+\Delta)$ is Cartier for an integer $r \ge 1$. 
Let $\mathcal{I}_{r, \Delta}$ be the ideal sheaf given in Definition \ref{char 0 def}. 
Then
$$\J\left((X, \Delta); \a^s\b^t\mathcal{I}_{r, \Delta}^{1/r}\right) \subseteq \J((X, \Delta); \a^s)\J((X, \Delta); \b^t)$$
for any ideal sheaves $\a, \b \subseteq \sO_X$ and for any real numbers $s, t>0$. 
In particular, 
\begin{equation}\tag{$\star$}
\overline{\mathrm{Jac}_X} \cdot \J\left((X, \Delta); \a^s\b^t \mathcal{I}_X^{(\bullet)}\right) \subseteq \J((X, \Delta); \a^s)\J((X, \Delta); \b^t),
\end{equation}
where $\overline{\mathrm{Jac}_X}$ is the integral closure of the Jacobian ideal sheaf of $X$ and 
$\mathcal{I}_{\Delta}^{(\bullet)}=\{\sO_X(-\lceil m \Delta \rceil)\}$ is the graded family of ideal sheaves associated to $\Delta$. 
\end{thm}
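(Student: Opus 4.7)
The plan is to follow the algebro-geometric strategy of \cite{Ei}: lift the question to $X \times X$, exploit the box-product formula for multiplier ideals on one side, and restrict to the diagonal to recover the formula on $X$. Write $p_1, p_2$ for the two projections, set $D := p_1^*\Delta + p_2^*\Delta$, and put $\a' := \a \boxtimes \sO_X$, $\b' := \sO_X \boxtimes \b$. Let $\delta : X \hookrightarrow X \times X$ denote the diagonal embedding with image $\Delta_X$ (not to be confused with the boundary divisor $\Delta$). Choose $f : Y \to X \times X$ that is simultaneously a log resolution of $(X \times X, D, \a'\b')$ and a factorizing embedded resolution of $\Delta_X$, so $\mathcal{I}_{\Delta_X}\sO_Y = \mathcal{I}_{\overline{\Delta_X}}\sO_Y(-R_{\Delta_X})$ with $R_{\Delta_X}$ $f$-exceptional; write $g := f|_{\overline{\Delta_X}} : \overline{\Delta_X} \to \Delta_X \cong X$.

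Introduce the auxiliary sheaf $M := f_* \sO_Y(\lceil N \rceil) \subseteq \sO_{X \times X}$, where $N := K_Y - f^*(K_{X \times X} + D) - s f^{-1}V(\a') - t f^{-1}V(\b') - d R_{\Delta_X}$. Because $-d R_{\Delta_X}$ is antieffective,
\[M \subseteq \J((X \times X, D); \a'^s \b'^t) = \J((X, \Delta); \a^s) \boxtimes \J((X, \Delta); \b^t),\]
where the equality is a standard Künneth computation on a product log resolution $\pi_1 \times \pi_2 : X_1 \times X_2 \to X \times X$. Restricting $M$ to the diagonal therefore yields an ideal of $\sO_X$ contained in $\J((X, \Delta); \a^s) \cdot \J((X, \Delta); \b^t)$. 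On the other hand, Lemma \ref{surjective} applied with $Z = \Delta_X$ of codimension $d$ produces the surjection
\[M \twoheadrightarrow g_* \sO_{\overline{\Delta_X}}(\lceil N \rceil|_{\overline{\Delta_X}}),\]
and a direct compatibility check identifies this surjection with the restriction-to-diagonal map on $M$, so that its image is precisely the ideal of $\sO_X$ just obtained. Since $\lceil N\rceil|_{\overline{\Delta_X}} \geq \lceil N|_{\overline{\Delta_X}}\rceil$ (the restriction of a ceiling dominates the ceiling of the restriction), we have $g_* \sO_{\overline{\Delta_X}}(\lceil N|_{\overline{\Delta_X}} \rceil) \subseteq g_* \sO_{\overline{\Delta_X}}(\lceil N\rceil|_{\overline{\Delta_X}})$, and chaining these inclusions reduces the first assertion of the theorem to the identification $\J((X, \Delta); \a^s \b^t \mathcal{I}_{r, \Delta}^{1/r}) = g_* \sO_{\overline{\Delta_X}}(\lceil N|_{\overline{\Delta_X}} \rceil)$.

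The identification is where the main work lies. Compute $N|_{\overline{\Delta_X}}$ using adjunction for the smooth embedding $\overline{\Delta_X} \hookrightarrow Y$, namely $K_Y|_{\overline{\Delta_X}} = K_{\overline{\Delta_X}} - \det N_{\overline{\Delta_X}/Y}$, together with $(K_{X \times X} + D)|_{\Delta_X} = 2(K_X + \Delta)$. On the other hand, compute $\J((X, \Delta); \a^s \b^t \mathcal{I}_{r, \Delta}^{1/r})$ using $g$ as a log resolution of $(X, \Delta, \a \b \mathcal{I}_{r, \Delta})$ and invoking Lemma \ref{Jacobian} applied to $g$, which yields $(1/r) \Div(\mathcal{I}_{r, \Delta} \sO_{\overline{\Delta_X}}) = \Div(\mathrm{Jac}_g) - (K_{\overline{\Delta_X}} - g^*(K_X + \Delta))$. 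Matching the two expressions reduces the identification to the key identity
\[\Div(\mathrm{Jac}_g) \;=\; K_{\overline{\Delta_X}} + \det N_{\overline{\Delta_X}/Y} + d \cdot R_{\Delta_X}|_{\overline{\Delta_X}}\]
of divisor classes on $\overline{\Delta_X}$. I expect this identity to be the main obstacle; it should follow by chasing the map $g^* \Omega^d_X \to \Omega^d_{\overline{\Delta_X}}$ defining $\mathrm{Jac}_g$ through the conormal sequence $0 \to \mathcal{I}_{\overline{\Delta_X}}/\mathcal{I}_{\overline{\Delta_X}}^2 \to \Omega^1_Y|_{\overline{\Delta_X}} \to \Omega^1_{\overline{\Delta_X}} \to 0$, using that the conormal of the diagonal in $X \times X$ maps onto $\Omega^1_X$ (an isomorphism on the smooth locus) and that the factorizing property $\mathcal{I}_{\Delta_X}\sO_Y = \mathcal{I}_{\overline{\Delta_X}}\sO_Y(-R_{\Delta_X})$ accounts for the $d \cdot R_{\Delta_X}|_{\overline{\Delta_X}}$ twist.

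For the "in particular" statement $(\star)$, use the containment $\mathrm{Jac}_X^r \cdot \sO_X(-r\Delta) \subseteq \mathcal{I}_{r, \Delta}$ recorded in Definition \ref{char 0 def}. For $r$ sufficiently divisible (so that both $r\Delta$ is integral and $r(K_X+\Delta)$ is Cartier), the asymptotic multiplier ideal stabilizes to $\J((X,\Delta); \a^s \b^t \mathcal{I}_\Delta^{(\bullet)}) = \J((X,\Delta); \a^s \b^t \sO_X(-r\Delta)^{1/r})$. Monotonicity of multiplier ideals in the ideal argument then gives $\J((X,\Delta); \a^s \b^t \mathrm{Jac}_X \sO_X(-r\Delta)^{1/r}) \subseteq \J((X,\Delta); \a^s \b^t \mathcal{I}_{r, \Delta}^{1/r})$, and the elementary fact $\overline{\mathfrak{c}} \cdot \J((X, \Delta); \mathfrak{d}) \subseteq \J((X, \Delta); \mathfrak{c} \cdot \mathfrak{d})$ (valid because $\mathfrak{c}\sO_{\widetilde{X}} = \overline{\mathfrak{c}}\sO_{\widetilde{X}}$ on any log resolution $\widetilde{X}$), applied with $\mathfrak{c} = \mathrm{Jac}_X$ and $\mathfrak{d} = \a^s \b^t \sO_X(-r\Delta)^{1/r}$, combined with the first assertion of the theorem, yields $(\star)$.
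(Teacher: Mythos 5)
Your overall strategy coincides with the paper's (and with Eisenstein's): pass to $X\times X$, invoke the Künneth description of the product of multiplier ideals, apply the vanishing/surjectivity statement of Lemma~\ref{surjective} with $Z$ the diagonal, and restrict. The ceiling manipulations, the containment $\mathrm{Jac}_X^r\,\sO_X(-r\Delta)\subseteq\mathcal{I}_{r,\Delta}$, and the deduction of $(\star)$ from the first inclusion are all sound and essentially what the paper does.

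The gap is exactly where you flag it. You reduce everything to the divisorial inequality
\[
\Div(\mathrm{Jac}_g)\ \ge\ K_{\overline{\Delta_X}}+\det N_{\overline{\Delta_X}/Y}+d\cdot R_{\Delta_X}\big|_{\overline{\Delta_X}}
\]
(you state it as an equality, but only $\ge$ is needed and equality is more than one should expect), and then say you ``expect this to follow'' from chasing the conormal sequence. This is not a proof; it is precisely the hard step. The paper avoids the direct conormal-sequence argument by building $f$ as a composite $Y\xrightarrow{h}\widetilde X\times\widetilde X\xrightarrow{\pi\times\pi}X\times X$, applying Lemma~\ref{Jacobian} separately to $h$, to $\pi$, and to $f|_{\overline X}$, and then invoking Eisenstein's Lemma~6.3, which supplies the crucial containment $\mathrm{Jac}_h|_{\overline X}\cdot(\mathrm{Jac}_\pi\sO_{\overline X})^2\subseteq\mathrm{Jac}_{f|_{\overline X}}\cdot\sO_{\overline X}(-d R_X|_{\overline X})$; translated into divisors via Lemma~\ref{Jacobian}, this is the inequality you need. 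Note that Eisenstein's lemma really does give only an inclusion, not an equality, which is further evidence that your hoped-for identity is optimistic.

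A secondary omission: because you never route the resolution through $\widetilde X\times\widetilde X$, you have no guarantee that $g=f|_{\overline{\Delta_X}}:\overline{\Delta_X}\to X$ is a log resolution of $(X,\Delta,\a\b\mathcal{I}_{r,\Delta})$, which you tacitly use when you compute $\J\bigl((X,\Delta);\a^s\b^t\mathcal{I}_{r,\Delta}^{1/r}\bigr)$ on $\overline{\Delta_X}$. The paper's tower fixes this automatically, since $f|_{\overline X}$ factors as $\overline X\xrightarrow{h|_{\overline X}}\widetilde X\xrightarrow{\pi}X$ with $\pi$ already a log resolution of the relevant data. To make your version rigorous you would need to impose additional simple-normal-crossing conditions on $f$ to ensure the restriction to the strict transform of the diagonal is itself a log resolution, or (more simply) adopt the tower construction.

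In short: correct skeleton, but the argument stops at the key Jacobian comparison that Eisenstein's Lemma~6.3 is there to supply, and a hypothesis on $g$ is used but not arranged.
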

\begin{proof}
Let $p_1, p_2 :X \times X \to X$ be the natural projections. 
We regard $X$ as a closed subvariety of $X \times X$ via the diagonal embedding $X \hookrightarrow X \times X$. 
Since 
$$J((X, \Delta); \a^s)\J((X, \Delta); \b^t)=\J((X \times X, p_1^*\Delta+p_2^*\Delta); (p_1^{-1}\a)^s (p_2^{-1}\b)^t)\big|_X,$$
it is enough to show that
$$\J\left((X, \Delta); \a^s\b^t \mathcal{I}_{r, \Delta}^{1/r}\right) \subseteq \J\left((X \times X, p_1^*\Delta+p_2^*\Delta); (p_1^{-1}\a)^s (p_2^{-1}\b)^t\right)\big|_X.$$
Let $\pi: \widetilde{X} \to X$ be a log resolution of $\Delta, \a, \b$ and $\mathcal{I}_{r, \Delta}$ so that $\mathcal{I}_{r, \Delta}\sO_{\widetilde{X}}=\sO_{\widetilde{X}}(-F_{r, \Delta})$, 
and denote by $g=\pi \times \pi: \widetilde{X} \times \widetilde{X} \to X \times X$  the product morphism. 
Note that the restriction of $g$ to the diagonal is nothing but $\pi$. 
Let $h: Y \to \widetilde{X} \times \widetilde{X}$ be a morphism such that the composite morphism $f=(h \circ g): Y \to X \times X$ is a factoring embedded resolution of $X$ in $X \times X$ and $\mathcal{I}_X \sO_{Y}=\mathcal{I}_{\overline{X}}\sO_Y(-R_X)$, where $\overline{X}$ is the strict transform of $X$ in $Y$. 
\[\xymatrix{
Y \ar[r]^{h \quad} \ar@/^1.7pc/[rr]^f & \widetilde{X} \times \widetilde{X} \ar[r]^g & X \times X\\
\overline{X} \ar@{^{(}->}[u] \ar[r]^{h|_{\overline{X}}} & \widetilde{X} \ar@{^{(}->}[u] \ar[r]^{\pi
} & X \ar@{^{(}->}[u]\\
}\]
Put $B=K_Y-f^*(K_{X \times X}+p_1^*\Delta+p_2^*\Delta)$ and denote $d$ by the dimension of $X$. 
It then follows from Lemma \ref{surjective} that 
\begin{align*}
&{f|_{\overline{X}}}_*\sO_{\overline{X}}(\lceil B-s \cdot f^{-1}(V(p_1^{-1}\a))-t \cdot f^{-1} (V(p_2^{-1}\b))-d \cdot R_X \rceil |_{\overline{X}})\\
=&f_*\sO_Y(\lceil B-s \cdot f^{-1}(V(p_1^{-1}\a))-t \cdot f^{-1}(V(p_2^{-1}\b))-d \cdot R_X \rceil)\big|_X\\
\subseteq& f_*\sO_Y(\lceil B-s \cdot f^{-1}(V(p_1^{-1}\a))-t \cdot f^{-1}(V(p_2^{-1}\b)) \rceil) \big|_X\\
=&\J((X \times X, p_1^*\Delta+p_2^*\Delta); (p_1^{-1}\a)^s (p_2^{-1}\b)^t) \big|_X.
\end{align*}

\begin{cl}
$$(B- d \cdot R_X)\big|_{\overline{X}} \ge K_{\overline{X}}-{f\big|_{\overline{X}}}^*(K_X+\Delta)-\frac{1}{r} \cdot h\big|_{\overline{X}}^*F_{r, \Delta}.$$
\end{cl}

\begin{proof}[Proof of Claim]
Applying Lemma \ref{Jacobian} to $h, \pi$ and ${f|_{\overline{X}}}$, one has
\begin{align*}
\mathrm{Jac}_h&=\sO_{Y}(-K_{Y/\widetilde{X} \times \widetilde{X}}), \\
\mathrm{Jac}_\pi^r&=\sO_{\widetilde{X}}(-r(K_{\widetilde{X}}-\pi^*(K_X+\Delta))-F_{r, \Delta}), \\
\mathrm{Jac}_{f|_{\overline{X}}}^r&=\sO_{\overline{X}}(-r(K_{\overline{X}}-{f|_{\overline{X}}}^*(K_X+\Delta))-h|_{\overline{X}}^*F_{r, \Delta}).
\end{align*}
It follows from \cite[Lemma 6.3]{Ei} ($X$ is assumed to be normal and $\Q$-Gorenstein in \textit{loc.~cit.}, but the same statement holds when $X$ is only normal) that 
$$\mathrm{Jac}_h \big|_{\overline{X}} \left(\mathrm{Jac}_\pi \cdot \sO_{\overline{X}}\right)^2 \subseteq \mathrm{Jac}_{f|_{\overline{X}}}\cdot \sO_{\overline{X}}(-d \cdot R_X \big|_{\overline{X}}), $$
which is equivalent to saying that 
\begin{align*}
&-K_{Y/\widetilde{X} \times \widetilde{X}}\big|_{\overline{X}}-2 \cdot {h\big|_{\overline{X}}}^*K_{\widetilde{X}}+2 \cdot {f\big|_{\overline{X}}}^*(K_X+\Delta)-\frac{2}{r} \cdot h\big|_{\overline{X}}^*F_{r, \Delta}\\
\le &-K_{\overline{X}}+{f\big|_{\overline{X}}}^*(K_X+\Delta)-\frac{1}{r} \cdot h\big|_{\overline{X}}^*F_{r, \Delta}-d \cdot R_X\big|_{\overline{X}}.
\end{align*}
Note that $(K_{\widetilde{X} \times \widetilde{X}}-g^*(K_{X \times X}+p_1^*\Delta+p_2^*\Delta))\big|_{\widetilde{X}}=2 (K_{\widetilde{X}}-\pi^*(K_X+\Delta))$. 
Thus, substituting this equality to the above inequality, one has
$$K_{\overline{X}}-{f\big|_{\overline{X}}}^*(K_X+\Delta)-\frac{1}{r} \cdot h\big|_{\overline{X}}^*F_{r, \Delta} \le (B-d \cdot R_X)\big|_{\overline{X}}.$$
\end{proof}

By the above claim, we have
\begin{align*}
& \J\left((X, \Delta);\a^s \b^t \mathcal{I}_{r, \Delta}^{1/r}\right)\\
=& {f\big|_{\overline{X}}}_*\sO_{\overline{X}}(\lceil K_{\overline{X}}-{f\big|_{\overline{X}}}^*(K_X+\Delta)-s \cdot f\big|_{\overline{X}}^{-1}(V(\a))- t \cdot f\big|_{\overline{X}}^{-1}(V(\b))-\frac{1}{r} \cdot h\big|_{\overline{X}}^*F_{r, \Delta} \rceil)\\
\subseteq &{f|_{\overline{X}}}_*\sO_{\overline{X}}(\lceil B-s \cdot f^{-1}(V(p_1^{-1}\a))-t \cdot f^{-1}(V( p_2^{-1}\b))-d \cdot R_X \rceil |_{\overline{X}})\\
\subseteq & \J((X \times X, p_1^*\Delta+p_2^*\Delta); (p_1^{-1}\a)^s (p_2^{-1}\b)^t)\big|_X.
\end{align*}
\end{proof}

\begin{rem}
The inclusion $(\star)$ in Theorem \ref{main2} involves not the Jacobian ideal sheaf but its integral closure, and so Theorem \ref{main2} is a little bit stronger than Theorem \ref{main} in this sense. 
We don't know at the moment how to prove the inclusion $(\star)$ using big generalized test ideals.
The difficulty is illustrated in the fact that big generalized test ideals are not necessarily integrally closed (see \cite{Mc}).  
\end{rem}

\end{document}